\documentclass[a4paper,10pt]{amsart}

\usepackage[utf8]{inputenc}
\usepackage[usenames, dvipsnames]{color}
\newtheorem{theorem}{Theorem}
\newtheorem{lemma}{Lemma}

\newcommand{\C}{\mathbb C}
\newcommand{\R}{\mathbb R}
\newcommand{\N}{\mathbb N}
\newcommand{\Z}{\mathbb Z}

\title[Universal entire functions that define order isomorphisms]{Universal entire functions that define order isomorphisms of countable real sets}

\author{P. M. Gauthier}

\address{Département de mathématiques et de statistique, Université de Montréal,
CP-6128 Centreville, Montréal,  H3C3J7, CANADA}
\email{gauthier@dms.umontreal.ca}

\begin{document}

\begin{abstract}
In 1895, Cantor showed that between every two countable dense real sets, there is an order isomorphism. In fact, there is always such an order isomorphism, that is the restriction of a universal entire function. 
\end{abstract}

\keywords{Universal functions} \subjclass{Primary: 30E10, 30K20}

\thanks{Research supported by NSERC (Canada) grant RGPIN-2016-04107}

\maketitle

\section{Introduction}

In 1895, Cantor proved that every two countable dense sets of reals are order isomorphic. The same year, Stäckel \cite{S1895} showed that, if $A$ is a countable and $B$ a  dense subset of $\C,$   then there exists  a non-constant entire function that maps $A$ {\em into} $B.$ He also claimed the corresponding result, if $A$ is a countable real set and $B$ is a dense real set. 

The following striking result was published by Franklin \cite{F1925}
in 1925.
 
\medskip
\noindent
{\bf Theorem F.} {\it Let $A$ and $B$ be countable dense subsets of $\R.$  Then, there exists an analytic function $f:\R\rightarrow \R,$ that restricts to an order isomorphism of $A$ onto $B.$} 

\medskip
Unfortunately, the proof invoked the statement that the uniform limit of analytic functions is analytic, which is false, as one can see, for example, from the Weierstrass approximation theorem. Fortunately, Franklin's theorem follows from a more general result of Burke \cite{B-TAMS2009}. 

In the present paper, we present the following two extensions of Franklin's Theorem.

\begin{theorem}\label{1}
Let $A$ and $B$ be countable dense subsets of $\R.$  Then, there exists an entire function $f,$ having finite order of growth, such that $f(\R)=\R,$ and  $f$ restricts to an order isomorphism of $A$ onto $B.$ Moreover,  $f^\prime(x)>0,$ for $x\in\R,$ so the mapping $f:\R\rightarrow\R$ is bianalytic. 
\end{theorem}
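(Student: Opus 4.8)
The plan is to combine Cantor's back-and-forth construction with an iterative fusion of entire ``corrections'' of uniformly bounded order, in the spirit of the proofs of Carleman-type approximation theorems. First, running Cantor's argument with the two sets enumerated by increasing modulus and choosing greedily at each back-and-forth step, I would produce an order isomorphism $\sigma:A\to B$ whose canonical extension to a strictly increasing homeomorphism $F:\R\to\R$ (namely $F(x)=\sup\{\sigma(a):a\in A,\ a<x\}$) grows at most polynomially, say $|F(x)|\le C(1+|x|)$. This ``taming'' of $\sigma$ is not a luxury: an order isomorphism whose extension grows, e.g., double-exponentially cannot be the restriction of any finite-order entire function, so the combinatorial choice of $\sigma$ and the analytic construction must cooperate from the start. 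Having fixed such a $\sigma$, I would then reduce the theorem to producing an entire function $f$ of finite order with $f(a)=\sigma(a)$ for all $a\in A$ and $f'(x)>0$ for all $x\in\R$: given this, $f$ is strictly increasing on $\R$, so $f|_A=\sigma$ is an order isomorphism onto $\sigma(A)=B$, and $f(\R)\supseteq B$ forces the interval $f(\R)$ to be unbounded in both directions, i.e.\ $f(\R)=\R$.

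So the task is \emph{exact} interpolation of $\sigma$ on the \emph{dense} set $A$ by an entire function of finite order with strictly positive derivative on the line. Since $A$ is not discrete, no single Carleman step can do it, and I would obtain $f$ as a limit $f=\lim_n f_n$ in $C^1_{\mathrm{loc}}(\C)$ of entire functions $f_n$, all of order at most a fixed $\rho$, with the following features, for preassigned $R_n\uparrow\infty$, $\delta_n\downarrow0$ and a nested family of finite ``committed'' index sets $P_n$ with $\bigcup_nP_n=\N$: (i) $f_n(\alpha_k)=\beta_k$ for $k\in P_n$, where $(\alpha_k,\beta_k)$ enumerates the graph of $\sigma$; (ii) $|f_n(x)-F(x)|<\delta_n$ on $[-R_n,R_n]$; (iii) $f_n'>0$ on $\R$, with a quantitative bound $f_n'\ge\kappa_n>0$ for an explicit positive $\kappa_n$ allowed to decay, but only along a schedule fixed in advance; (iv) a \emph{uniform} growth bound $|f_n(z)|\le Me^{|z|^{\rho}}$ with $M$ and $\rho$ independent of $n$; (v) $\|f_{n+1}-f_n\|_{C^1([-R_{n+1},R_{n+1}])}$, together with the size of $f_{n+1}-f_n$ off the real axis, summably small, and in fact at most $\tfrac12\kappa_n$ on the range where it matters. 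In the limit, (v) makes $f$ entire with $f'=\lim_n f_n'$; (iv) gives order $\le\rho$; (i) gives $f|_A=\sigma$; and (ii), (iii), (v) together force $f'>0$ on $\R$, not merely the non-strict $f'\ge0$ that survives from (iii) alone — which is precisely why the corrections are kept below $\tfrac12\kappa_n$.

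The induction rests on a one-step lemma: given $f_n$ as above, a new index $j_{n+1}$ (schedule the enumeration of the graph so that $\alpha_{j_{n+1}}$ already lies well inside $[-R_n,R_n]$, possible because $R_n\to\infty$), and $R_{n+1}>R_n$, $\delta_{n+1}<\delta_n$, construct $f_{n+1}$ meeting (i)--(v). I would do this in three moves. (a) A Runge/Carleman-type approximation, carried out with approximants whose order is under control --- here the polynomial growth of $F$ keeps the relevant data bounded --- producing an entire $g$ with $g\approx F$ on $[-R_{n+1},R_{n+1}]$ and $g\approx f_n$ on a neighbourhood of $[-R_n,R_n]$. (b) Hermite interpolation to restore the values $\beta_k$, $k\in P_n$, by adding a small multiple of $\prod_{k\in P_n}(z-\alpha_k)$, and likewise to set $g(\alpha_{j_{n+1}})=\beta_{j_{n+1}}$ --- a correction that is small precisely because $g\approx F$ near $\alpha_{j_{n+1}}$ while $F(\alpha_{j_{n+1}})=\beta_{j_{n+1}}$. (c) Restoration of strict positivity of the derivative on $\R$, e.g.\ by replacing the current function by a primitive of the square of its derivative plus a small positive constant (or by adding such a term), after which the errors introduced are absorbed by choosing $\delta_{n+1}$, $\kappa_{n+1}$ and the correction magnitudes small enough. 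Each of (a)--(c) perturbs the growth of the function and the lower bound of its derivative only by controlled amounts, which propagates (iii) and (iv) along the whole sequence.

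The step I expect to be the main obstacle is maintaining \emph{simultaneously}, through infinitely many rounds, the bounded order (iv) and the \emph{strict} positivity (iii). Exact interpolation on a dense set forces infinitely many approximation steps, and a priori every step can enlarge growth and push $\min_{[-R_n,R_n]}f_n'$ toward $0$; worse, approximating $F$ on a newly exposed interval may genuinely require a sizeable correction, yet that correction's derivative must stay below the current positive lower bound of $f_n'$ \emph{everywhere} on $\R$, not just where the new work is being done. The way out, as reflected above, is to keep the taming of $F$ in play --- never approximate $F$ beyond a compact interval, and exploit $|F(x)|\le C(1+|x|)$ to bound the Taylor coefficients of the $f_n$ uniformly so that the order stays $\le\rho$ --- and to let $\kappa_n$ decay along a schedule fine-tuned against the sizes of the successive corrections, so that strict positivity, and not merely $f'\ge0$, persists in the limit.
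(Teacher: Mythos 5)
Your plan founders at the reduction step, before any of the analytic machinery starts. You first freeze an order isomorphism $\sigma:A\to B$ by a purely combinatorial back-and-forth (tamed only in growth) and then ask for an entire $f$ of finite order with $f(a)=\sigma(a)$ for \emph{all} $a\in A$. Since $A$ is dense in $\R$ and $f$ is continuous, any such $f$ must coincide on all of $\R$ with the canonical extension $F$ of $\sigma$; so your reduced problem is solvable only if $F$ is already real-analytic (indeed the restriction of a finite-order entire function). Cantor's greedy construction plus the bound $|F(x)|\le C(1+|x|)$ gives no regularity of $F$ beyond its being an increasing homeomorphism, and the reduced statement is simply false for many admissible $\sigma$: for $A=B=\mathbb{Q}$, the map $\sigma(x)=x$ for $x\le 0$ and $\sigma(x)=2x$ for $x\ge 0$ is an order isomorphism of $\mathbb{Q}$ onto $\mathbb{Q}$ of linear growth whose extension is not even differentiable at $0$, so no entire function interpolates it on $\mathbb{Q}$. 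You offer no argument that your particular greedy $\sigma$ avoids this, and none is available without circularity, since producing an order isomorphism with analytic extension is exactly the theorem. This is also why your one-step lemma cannot be proved: condition (i) with $\bigcup_n P_n=\N$ (or (ii) with $\delta_n\downarrow 0$, $R_n\uparrow\infty$) already forces the limit to equal $F$ on $\R$, so as the committed points become dense the corrections in your step (b) cannot remain summably small unless $F$ is analytic. The real obstacle is thus not the bookkeeping of order and strict positivity that you single out, but the fact that the target of the interpolation was fixed too early.

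The repair — and it is how the paper argues — is to build the pairing and the entire function \emph{simultaneously}, never committing to $\sigma$ in advance. One writes $f=z+\sum_j\lambda_jh_j$ with $h_j(z)=e^{-z^2}\prod_{k<j}(z-\alpha_k)$, so each new correction vanishes at all previously matched points of $A$, and one runs the back-and-forth inside the analytic construction: to match the next unmatched $\beta_{2n}\in B$, solve $f_{2n-1}(x_n)=\beta_{2n}$ (possible since $f_{2n-1}'>0$ and $f_{2n-1}$ is surjective) and use the density of $A$ to pick $\alpha_{2n}\in A$ so close to $x_n$ that the required coefficient $\lambda_{2n}$ is as small as desired; dually, the image $\beta_{2n+1}$ of the next unmatched $\alpha_{2n+1}\in A$ is chosen, by density of $B$, inside a small interval around $f_{2n}(\alpha_{2n+1})$. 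The freedom in \emph{which} element is matched to which is precisely what makes every correction small, and smallness then yields convergence, $f'>0$ on $\R$ (the factor $e^{-z^2}$ keeps the $h_j$ and $h_j'$ bounded on $\R$), finite order, and surjectivity of $A\to B$, with no need for a pre-tamed $F$ or Carleman-type approximation in this theorem.
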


In proving Theorem \ref{1} we shall also show that $f$ can be further required to map a preassigned point $a\in A$ to a preassigned point $b\in B.$ 
Theorem \ref{1} is only a small improvement of Franklin's theorem, but we present the proof, firstly because of the error in Franklin's proof, but mainly because the proof we present can be adapted to prove the following. 

\begin{theorem}\label{2}
Let $A$ and $B$ be countable dense subsets of $\R.$   Then, there exists a universal entire function $f,$ such that: $f(\R)=\R,$  $f$ restricts to an order isomorphism of $A$ onto $B;$ and $f^\prime(x)>0,$ for $x\in\R.$  
\end{theorem}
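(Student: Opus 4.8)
The plan is to reproduce, with one extra ingredient, the inductive construction behind Theorem \ref{1}: we interleave the ``finitely many values at a time'' back-and-forth that builds the order isomorphism with countably many \emph{universality tasks} that force a Birkhoff-type density of translates. Fix enumerations $A=\{a_1,a_2,\dots\}$, $B=\{b_1,b_2,\dots\}$, and a countable dense subset $\{\psi_1,\psi_2,\dots\}$ of the space $H(\C)$ of entire functions with the topology of local uniform convergence (for instance polynomials with rational coefficients). Arrange a single list of tasks in which there occur, each infinitely often and in the natural order: (i) ``adjoin $a_k$ to the domain''; (ii) ``adjoin $b_k$ to the range''; (iii) ``for the pair $(j,m)$, make some translate $f(\cdot+c)$, $|c|$ large, satisfy $|f(\cdot+c)-\psi_j|<1/m$ on $\overline D(0,m)$''. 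We shall build entire functions $g_0,g_1,g_2,\dots$, each real on $\R$ with $g_n'>0$ on $\R$, together with finite partial order isomorphisms $\phi_0\subset\phi_1\subset\cdots$ with $g_n|_{\operatorname{dom}\phi_n}=\phi_n$ \emph{exactly}, and with radii $\rho_n\uparrow\infty$, so that $f:=\lim_n g_n$ is entire and inherits every required property.

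The inductive step has the form $g_n=g_{n-1}+\Delta_n$, where $\Delta_n$ is a polynomial produced by a Mergelyan/Runge approximation on a compact set $E_n$ of connected complement. The set $E_n$ always contains the disk $\overline D(0,\rho_{n-1})$ (to force local uniform convergence of $\sum\Delta_n$), two thin rectangles over $[\rho_{n-1},\rho_n]$ and over $[-\rho_n,-\rho_{n-1}]$ (to ``repair'' monotonicity on the longer interval $[-\rho_n,\rho_n]$), and every far disk $D_k=\overline D(it_k,m_k)$ introduced at earlier universality stages (so those gains are not destroyed). On $\overline D(0,\rho_{n-1})$ and on the old far disks the target for $\Delta_n$ is $0$; on the two new rectangles the target is $G_n-g_{n-1}$, where $G_n$ is a polynomial chosen in advance so that $G_n$ agrees with $g_{n-1}$ at $\pm\rho_{n-1}$, is $C^1$-close to $g_{n-1}$ on $[-\rho_{n-1},\rho_{n-1}]$, is strictly increasing on $[-\rho_n,\rho_n]$, and has $G_n(\pm\rho_n)\to\pm\infty$ as $n\to\infty$ (obtained by $C^1$-approximating a smooth increasing extension of $g_{n-1}|_{[-\rho_{n-1},\rho_{n-1}]}$). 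If the task is of type (iii) we also include a new far disk $D_n=\overline D(it_n,m)$ with $t_n$ chosen so large that $D_n$ is disjoint from the rest of $E_n$, and there the target is $\psi_j(\cdot-it_n)-g_{n-1}$. The target is continuous on $E_n$ and holomorphic on its interior, so Mergelyan's theorem gives a polynomial $P_n$ approximating it to within a small $\eta_n$; replacing $P_n$ by $P_n$ minus its Lagrange interpolant at $\operatorname{dom}\phi_{n-1}$ (small, since $P_n\approx G_n-g_{n-1}\approx0$ there) and then by $\tfrac12\big(P_n(z)+\overline{P_n(\bar z)}\big)$, we obtain $\Delta_n$ that in addition vanishes on $\operatorname{dom}\phi_{n-1}$ and is real on $\R$.

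For a task of type (i) or (ii) we do the same but enlarge $\operatorname{dom}\phi_n$ by one point and add one scalar constraint to $\Delta_n$. In the ``forth'' case, $\operatorname{dom}\phi_{n-1}$ splits $\R$ into gaps; the new point $a$ sits in one of them, and because $g_{n-1}$ is increasing, $g_{n-1}(a)$ already lies in the corresponding gap of the range; by density of $B$ we may pick $b\in B$ in that gap with $|b-g_{n-1}(a)|<\eta_n$, set $\phi_n(a)=b$, and require in addition $\Delta_n(a)=b-g_{n-1}(a)$ (a prescribed value of size $<\eta_n$, trivially incorporated into the Mergelyan step). The ``back'' case is symmetric: given the new value $b\in B$, let $x_0$ be the point where the increasing function $g_{n-1}$ attains $b$, choose $a\in A$ near $x_0$ inside the correct gap of $A$ and outside $\operatorname{dom}\phi_{n-1}$, set $\phi_n(a)=b$, and require $\Delta_n(a)=b-g_{n-1}(a)$, again small by continuity. (One arranges the task order so that the relevant points are inside $[-\rho_{n-1},\rho_{n-1}]$ when processed, and, using $G_n(\pm\rho_n)\to\pm\infty$, so that $g_{n-1}$ already assumes every value needed in the ``back'' step.)

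For the verification: Cauchy estimates on the thin rectangles turn the $C^0$-bound on $\Delta_n$ there into a $C^1$-bound on $[-\rho_n,\rho_n]$, so $g_n$ is $C^1$-close to $G_n$ on $[-\rho_n,\rho_n]$ and hence $g_n'>0$ there; choosing the $\eta_n$ to decrease fast enough relative to the margins $\min_{[-\rho_N,\rho_N]}g_N'$ makes $f'=\lim g_n'>0$ on every $[-\rho_N,\rho_N]$, hence on all of $\R$. Summability of $\eta_n$ on each $\overline D(0,\rho_{n-1})$ makes $f:=\lim g_n$ entire with $f|_{\operatorname{dom}\phi}=\phi$, where $\phi=\bigcup\phi_n$ is, by the back-and-forth, an order isomorphism of $A$ onto $B$; since $f$ is continuous and strictly increasing on $\R$ and its values include the unbounded set $B=\phi(A)$, we get $f(\R)=\R$. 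Finally, at each stage $n$ attached to a type-(iii) task, $|g_n-\psi_j(\cdot-it_n)|<\eta_n$ on $D_n$ and all later corrections are tiny on $D_n$, so $|f(w+it_n)-\psi_j(w)|<1/m$ on $\overline D(0,m)$ with $|it_n|\to\infty$; as $\{\psi_j\}$ is dense in $H(\C)$, the translates of $f$ are dense in $H(\C)$, i.e.\ $f$ is universal. The one genuinely delicate point — everything else being bookkeeping — is keeping $f'>0$ on the \emph{whole} line while the far-away perturbations needed for universality are a priori uncontrolled off $E_n$; this is exactly why each step must simultaneously shrink $\Delta_n$ on $\overline D(0,\rho_{n-1})$ and steer $g_n$ back onto an increasing track over the larger interval $[-\rho_n,\rho_n]$, which is the role of the auxiliary polynomials $G_n$, and why the radii $\rho_n$ must be pushed to infinity at a rate we control against the $\eta_n$.
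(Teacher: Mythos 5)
Your architecture is genuinely different from the paper's: you run one back-and-forth induction in which each step is a Mergelyan correction on a set (central disk, thin rectangles over new real intervals, far disks for universality), with exact interpolation enforced by subtracting a Lagrange interpolant, whereas the paper separates the tasks, building a universal bianalytic-on-$\R$ function $\Phi$ and a damping function $H$ (small on a chaplet, near $1$ on $\R$) by two approximation lemmas, and then superposing the Theorem~1 series $\Phi+H\sum\lambda_jh_j$, whose terms are $O(1/z)$ on the chaplet so that universality survives. Your plan is viable in outline, but the step you dismiss as an estimate is exactly where it has a real gap. The claim ``Cauchy estimates on the thin rectangles turn the $C^0$-bound on $\Delta_n$ there into a $C^1$-bound on $[-\rho_n,\rho_n]$'' does not hold near the junction points $\pm\rho_{n-1}$ (nor at $\pm\rho_n$, though there you can simply lengthen the rectangles). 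A Cauchy estimate at a real point needs a full two-dimensional neighbourhood on which one fixed function is uniformly small; near $\rho_{n-1}$ the rectangle is tangent to the disk, the union omits cusp-shaped regions above and below the tangency, and, worse, the function that is small changes across the junction ($\Delta_n$ on the disk side, $\Delta_n-(G_n-g_{n-1})$ on the rectangle side, and $G_n-g_{n-1}$ is not small on the two-dimensional disk). So the inductive assertion $g_n'>0$ on $[-\rho_n,\rho_n]$, and hence $f'>0$ on $\R$, is not actually proved. This is precisely the difficulty the paper's Lemma~\ref{KE} is built to handle: it gets simultaneous $C^0$- and $C^1$-approximation on the disk-plus-segments set directly (approximate $f'$ by a polynomial and integrate), and uses Deutsch's Walsh-type lemma to interpolate both values \emph{and derivatives} at the junction points $\pm r_{n+1}$, so that the pieces glue in $A^1$ at the next stage; alternatively Hoischen's $C^1$ Carleman-type theorem does this job on $\R$. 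Your proof needs some such device (derivative interpolation at $\pm\rho_{n-1}$, or $C^1$-approximation obtained from approximating the derivative) rather than bare Cauchy estimates.

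A second, smaller gap: after symmetrizing $P_n\mapsto\tfrac12\bigl(P_n(z)+\overline{P_n(\bar z)}\bigr)$ to get real coefficients, you lose all control on the far disks $\overline D(it_k,m_k)$, because the symmetrized polynomial involves the values of $P_n$ on the reflected disks in the lower half-plane, where nothing was prescribed. You must make the configuration and the data symmetric — include the reflected disks carrying the conjugate-reflected targets, as the paper does with its special chaplet $E_n^{\pm}$ and $\varphi(z)=\overline{p_n(\overline{z-a_n^-})}$ on $E_n^-$ — before symmetrization preserves the universality estimates. (Also, as written you assert $g_n'>0$ on all of $\R$, which your construction cannot give since each $g_n$ is a polynomial; only the statement on $[-\rho_n,\rho_n]$ is available, and fortunately only that is used.) With these repairs your single-induction scheme should work, but as it stands the monotonicity step and the realness/universality interaction are not established.
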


Here, by a universal entire function, we mean an entire function which has the remarkable property that its translates are dense in the space of all entire functions. The existence of a universal entire function was proved in 1929 by George Birkhoff \cite{Bi1929}.

In higher dimensions, the following result, was proved by Morayne \cite{Mo1987} in 1987 for $\R^n$ and $\C^n$ and  by Rosay and Rudin \cite{RR1988} in 1988, with a different  proof, for $\C^n.$ 

\medskip
\noindent
{\bf Theorem MRR.} {\it Let $A$ and $B$ be countable dense subsets of $\C^n$ (respectively $\R^n$) $n>1.$ Then, there is a measure preserving  biholomorphic mapping of $\C^n$ (respectively bianalytic mapping of $\R^n$) that maps $A$ onto  $B.$}

\medskip
This theorem appears to be stronger than Franklin's Theorem, however the proof of Theorem MRR fails for $n=1.$ Moreover, for $n=1,$ all measure preserving automorphisms are of the form $z\mapsto  az+b, \, |a|=1,$ so the only automorphic images of a set $A$ are the sets $aA+b.$  

In 1957, Erdös \cite{E1957} asked whether, given countable dense subsets $A$ and $B$ of $\C,$ there exists an entire function $f$ that maps $A$ onto $B$ (compare Stäckel [{\em op. cit.}]). In 1967, Maurer \cite{Ma1967} gave an affirmative answer. In this context, there are the following two interesting results 
of Barth and Schneider \cite{BS1970} \cite{BS1972}, the second of which improves the result of Maurer.

\medskip
\noindent
{\bf Theorem BS1.} {\it Let A and B be countable dense subsets of $\R$. Then,
there exists an entire transcendental function $f$ such that $f(z) \in B$ if and only if $z\in A.$}

\medskip
\noindent
{\bf Theorem BS2.} {\it Let $A$ and $B$ be countable dense subsets of $\C.$ Then, there exists an entire function $f,$ such that $f(z)\in B$ if and only if $z\in A.$}

\medskip

Although the next result is not directly on the topic of the present paper, we consider it worth mentioning, perhaps as a distant cousin. 

\medskip
\noindent
{\bf Theorem P \cite{P2013}.}   {\it Let $A$ and $B$ be countable dense subsets of the Hilbert cube $H=[0,1]^\N.$  Then, for every $\epsilon>0,$ there is a measure preserving  homeomorphism $f$ of $H$ that maps $A$ onto  $B,$ and $\rho(f,id)<\epsilon,$ where $\rho$ is a distance on the set of continuous  mappings $H\rightarrow H$ and id is the identity mapping.}
\medskip

In recent years, Maxim Burke has obtained deep results of a nature similar to ours and Franklin's  see (\cite{B-JMAA2009}, \cite{B-TAMS2009} and \cite{B2017}).  
I thank the referees for helpful comments. 


\section{Proof of Theorem \ref{1}}

\begin{proof}
The desired  function $f$ will be of the form 
$$
	f(z) = \lim_{n\rightarrow\infty} f_n(z) = 
			z + \sum_{j=1}^\infty \lambda_j h_j(z),
$$
where $f_n(z)=z+\sum_{j=1}^n\lambda_j h_j(z)$ The purpose of the $h_j$'s, which we momentarily define, is to recursively make adjustments to obtain more of the desired mapping properties, without losing those properties which we have previously assured. 

Now, given a sequence $\{\epsilon_n\}$ of positive numbers, whose sum is less than $1,$   we shall construct an enumeration $(\alpha_n)$ of $A,$ an enumeration $(\beta_n)$ of $B$ and the real sequence $(\lambda_n)$ such that, taking the functions $h_j$ of the following form:
$$
	h_1=1; \quad  \mbox{and} \quad h_j(z) = e^{-z^2} \prod_{k=1}^{j-1} (z-\alpha_k),  \quad \mbox{for} \quad j=2,3,\ldots,
$$
we shall have that, for  $n=1,2,\ldots,$
\begin{equation}\label{interpolation}
	f_n(\alpha_j)=\beta_j, \quad j=1,\ldots,n, 
\end{equation}
\begin{equation}\label{disque} 
	\lambda_1 = \beta_1-\alpha_1 \quad \mbox{and} \quad
	|\lambda_n h_n(z)| < \epsilon_n, \quad \mbox{if} \quad |z|\le n, \quad n>1;
\end{equation}
\begin{equation}\label{droite}
	|\lambda_n h_n^\prime(x)| < \epsilon_n, \quad \mbox{if} \quad x\in\mathbb R;
\end{equation}
\begin{equation}
	f_n(\mathbb R)\subset \mathbb R.
\end{equation}
From the second condition, $f$ will be an entire function and the third will allow us to differentiate this series term by term on  $\mathbb R.$  
On $\mathbb R$   we have:
$$  
 f'(x) = 1 +  \sum_{j=1}^\infty \lambda_j h'_j(x) >  1  -  \sum_{j=1}^\infty \epsilon_j > 0.
$$
Hence, $f:\mathbb R\rightarrow \mathbb R$ is strictly increasing and consequently injective. Moreover, since $f(x)\rightarrow\pm\infty,$ as $x\rightarrow\pm\infty,$ the function $f:\R\rightarrow\R$ is surjective and, consequently bianalytic. 

Now, we shall show that, if the $\lambda_n$ are chosen even smaller, the function $f$ will be of finite order. Indeed, at stage $n,$ the $\alpha_1, \ldots, \alpha_{n-1}$ having been chosen, we may choose $\lambda_n\in\mathbb R$ so small that 
\begin{equation}\label{order}
	|\lambda_n|\prod_{k=1}^{n-1}|z-\alpha_k|<e^{|z|}/2^n, \quad \mbox{for all} \quad z\in \C.
\end{equation}
Thus, 
$$
	|f(z)| \le |z| + |\lambda_1| + e^{|z|^2}\sum_{n=2}^\infty e^{|z|}/2^n \le 
		|z|+|\lambda_1| + e^{|z|^3}, \quad \mbox{for all} \quad z\in \C,
$$
so $f$ is of finite order. 

Choose $a_1\in A$ and $b_1\in B.$
Now, we shall  choose the sequences  $\{\alpha_n\}, \{\beta_n\}$ and $\{\lambda_n\}.$ 
First, we choose enumerations  $\{a_n\}$ and $\{b_n\}$ of $A$ and $B.$  
The sequences  $\{\alpha_n\}$ and $\{\beta_n\}$ will be rearrangements of   $\{a_n\}$ and $\{b_n\}$ chosen recursively.
Set:
$$\alpha_1 = a_1 \quad \beta_1 \quad \mbox{and} \quad  \lambda_1 = \beta_1-\alpha_1.$$ 
We have defined $\alpha_1, \lambda_1, \beta_1$ and hence also $h_1,$  $f_1$ and  $h_2.$ Note that 
$$f_1(a_1) = f_1(\alpha_1) = \beta_1 = b_1.$$   
Let  $\beta_2$ be the first $b_j$ not equal to $\beta_1 = b_1.$ In fact $\beta_2=b_2.$ 

Suppose we have chosen distinct members  
$\alpha_1,\ldots,\alpha_{2n-1}$ of the sequence $\{a_i\},$ such that,  for each $k=1,\cdots,n,$ $\alpha_{2k-1}$ is the first $a_i$ not previously chosen;
distinct $\beta_1,\ldots,\beta_{2n}$ from the sequence $\{b_j\},$ such that, for each $k=1,\ldots,n,$ $\beta_{2k}$ is the first $b_j$ not previously chosen; 
real numbers $\lambda_1,\ldots,\lambda_{2n-1},$ such that conditions (\ref{disque}), 
(\ref{droite}) and (\ref{order}) are satisfied.
We shall now choose 
$\alpha_{2n}, \lambda_{2n}, \alpha_{2n+1},  \lambda_{2n+1}, \beta_{2n+1}$ and $\beta_{2(n+1)}.$ 

Choose $\eta>0$ such that (\ref{disque}), (\ref{droite}) and (\ref{order}) hold for $|\lambda|<\eta.$ Now, it follows from  (\ref{droite}) that  $f_{2n-1}^\prime(x)\ge 1-\sum_{j=1}^\infty\epsilon_j>0$ for all $x\in\mathbb R.$ Thus the function $f_{2n-1}$ is surjective on $\mathbb R.$ Choose a number $x_n$ such that $f_{2n-1}(x_n)=\beta_{2n}.$ Note that $h_{2n}(x_n)\not=0.$
To see this, suppose, to obtain a contradiction, that  $h_{2n}(x_n)=0.$ Then, $x_n=\alpha_j,$ for some $j=1,\ldots,2n-1$ and $f_{2n-1}(\alpha_j)=\beta_{2n}.$ However, $f_{2n-1}(\alpha_j)=\beta_j.$ Thus, $\beta_{2n}=\beta_j.$ But this contradicts the choice of $\beta_{2n}$ as being distinct from $\beta_j,$ for $j<2n.$ Since $h_{2n}(x_n)\not=0,$ the function
$$
	\lambda(x) = \frac{\beta_{2n}-f_{2n-1}(x)}{h_{2n}(x)}
$$
is defined and continuous in a neighbourhood $I$ of $x_n,$ with $\lambda(x_n)=0;$ by choosing $I$ smaller we can also have that $|\lambda(x)|<\eta$ for $x\in I.$ By the density of $A$ there is some $\alpha_{2n}\in A\setminus \{\alpha_1,\ldots,\alpha_{2n-1}\}$ in $I.$ Write $\lambda_{2n}=\lambda(\alpha_{2n}).$ Then we have that $|\lambda_{2n}|<\eta$ and $f_{2n-1}(\alpha_{2n})+\lambda_{2n}h_{2n}(\alpha_{2n})=\beta_{2n}.$ This implies  (\ref{interpolation}), (\ref{disque}), (\ref{droite}) and (\ref{order}) for $2n.$

The choice of $\alpha_{2n+1}$ is easy.  
We choose the first of the $a_j $  different from 
$\alpha_1, \ldots, \alpha_{2n} $
and call it $\alpha_{2n+1}. $ 

Since $h_{2n+1}(\alpha_{2n+1})\not=0,$ the linear function 
$$\beta(\lambda) =  f_{2n}(\alpha_{2n+1})+\lambda h_{2n+1}(\alpha_{2n+1})$$ is non-constant. Hence $J_n=\{\beta(\lambda):|\lambda|<\epsilon\}$ is a non-empty open interval and, since $B$ is dense, we may choose an element of $(B\cap J_n)\setminus \{\beta_1,\cdots,\beta_{2n}\},$ which we call $\beta_{2n+1}.$ The element $\beta_{2n+1}$ by definition has the form $\beta_{2n+1}=\beta(\lambda)$ for a  certain $\lambda,$ with $|\lambda|<\epsilon.$ We denote this $\lambda$ by $\lambda_{2n+1}.$
If $\epsilon$ is sufficiently small, then $\lambda_{2n+1}$ satisfies   (\ref{disque}), (\ref{droite}) and (\ref{order}). Condition  (\ref{interpolation}) is satisfied by the choice we have just made for
$\beta_{2n+1}$ and $\lambda_{2n+1}.$ 

For $\beta_{2(n+1)}$ we choose the first of the $b_j$'s different from 
$\beta_1, \ldots,\beta_{2n+1}.$ 

The construction of the sequences  $(\alpha_n), (\lambda_n),$ and $(\beta_n),$ 
and hence also of the the entire function is complete.  This concludes the proof of Theorem \ref{1}. 

\end{proof}


\section{Approximation by Entire Functions}

For a  set $S\subset\C$ we denote by $S^0$ the interior of $S.$ We 
say that a function $f:S\rightarrow\C$ is holomorphic on $S$ if there is an open neighbourhood $U$ of $S$ and a holomorphic function $F$ on $U,$ such that $F=f$ on $S.$  We denote by   $H(S)$ the class of functions holomorphic on $S$ and by $A(S)$ the class of functions continuous on $S$ and holomorphic on $S^0.$  The extended complex plane is denoted by $\overline \C.$ Let $E\subset \C$ be symmetric with respect to the real axis. We shall say that a  function $f:E\rightarrow \C$ defined on such a  set $E$ is symmetric with respect to the real axis, if $f(\overline z) = \overline{f(z)},$ for $z\in E.$ 

A compact set $K\subset\C$ is a {\em Mergelyan set} if every $f\in A(K)$ can be uniformly approximated by polynomials. 

\medskip
\noindent
{\bf Mergelyan Theorem.} {\em A compact set $K\subset \C$ is a Mergelyan set if and only if $\C\setminus K$ is connected. Moreover, if $K$ is symmetric with respect  to the real axis, $f\in A(K)$ and $f(\overline z)=\overline{f(z)}, z\in K,$  the approximating polynomials can be taken with real coefficients.}
\medskip

\begin{proof}
To verify the last statement, which is not part of the original Mergelyan Theorem, suppose $K$ is symmetric with respect to the real axis, $f\in A(K)$  and $f(\overline z)=\overline{f(z)}, z\in K.$ Let $p_n, \, n=1,2,\ldots,$ be a sequence of polynomials that converges uniformly to $f$ and set $q_n(z)= (p_n(z) + \overline{p_n(\overline z)})/2.$ Then, the sequence of polynomials $q_n$ also converges uniformly to $f$ and moreover have real coefficients.  
\end{proof}

For a topological vector space $X,$ we denote by $X^*$ the continuous dual space. 
The following Walsh-type lemma on simultaneous approximation and interpolation is due to Frank Deutsch.

\medskip
\noindent
{\bf Walsh Lemma \cite{D1966}.} {\em Let $X$ be a locally convex topological complex vector space and $Y$ a dense subspace. Then, if $x\in X,$ $U$ is a neighbourhood of $0$ and $L_1,\ldots,L_n\in X^*,$ there is a $y\in Y,$ such that} 
$$
	y\in x+U \quad \mbox{and} \quad  L_j(y)=L_j(x), \quad \mbox{for}  \quad j=1,\ldots,n.
$$

Let $E$ be a closed set that is starlike with respect to the origin. For $f:E\rightarrow\C,$ we shall write $f\in A^1(E),$ if $f\in H(E^0),  \, f$ has a radial derivative at each point of $E,$ which by  abuse of notation, we shall also denote as $f^\prime$ and $f^\prime$ is continuous on $E.$ We note that there are many other meanings assigned to the notation $A^1(E)$ in the literature.

\begin{lemma}\label{KE}

Let $K\subset\mathbb C$ be compact  and starlike with respect to the origin; let $E$ be a Mergelyan set disjoint from $K$ and set $Q=K\cup E.$ Suppose $f\in A(Q); \, f|_K\in A^1(K)$ and $z_1,\ldots,z_n$ are distinct points in $K.$ 
Then, for every $\epsilon>0,$ there is a polynomial $p$ such that
$$
	|p-f|_Q<\epsilon, \,  |p^\prime-f^\prime|_K<\epsilon, 
		\quad p(z_j)=f(z_j), \,\, p^\prime(z_j)=f^\prime(z_j), \quad j=1,\ldots, n. 
$$
If $Q$ is symmetric with respect to the real axis, the $z_j$ are real numbers and $f(\overline z)=\overline{f(z)}, z\in Q,$ we may take  $p$  with real coefficients.

\end{lemma}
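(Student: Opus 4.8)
The plan is to split the assertion into an approximation claim and an interpolation claim, and to obtain the interpolation from the Walsh Lemma. Let $X$ denote the space of functions $g$ with $g\in A(Q)$ and $g|_K\in A^1(K)$, equipped with the norm $\|g\|=\max\{|g|_Q,\,|g'|_K\}$; this is a locally convex topological complex vector space, $f\in X$, the polynomials restricted to $Q$ form a linear subspace of it, and, since $z_1,\dots,z_n\in K\subset Q$, the $2n$ functionals $g\mapsto g(z_j)$ and $g\mapsto g'(z_j)$ lie in $X^{*}$. Hence, once we know the polynomials are \emph{dense} in $X$, the Walsh Lemma (applied with $Y=$ the polynomials, $U=\{g:\|g\|<\epsilon\}$, and these functionals) yields a polynomial $p$ with $|p-f|_Q<\epsilon$, $|p'-f'|_K<\epsilon$, and $p(z_j)=f(z_j),\ p'(z_j)=f'(z_j)$. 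When $Q$ is symmetric and the $z_j$ are real, the numbers $f(z_j),f'(z_j)$ are real, $K$ is symmetric, and one finishes by replacing $p$ with $\tilde p(z)=\tfrac12\bigl(p(z)+\overline{p(\overline z)}\bigr)$, which has real coefficients and still satisfies all the above conclusions. So everything reduces to the density of the polynomials in $X$.

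To prove that density, fix $g\in X$ and $\epsilon>0$. Since $K$ is starlike with respect to $0$ it meets every ray from the origin in an initial segment, so $\C\setminus K$ is connected and $K$ is a Mergelyan set. As $g'|_K\in A(K)$, choose a polynomial $q$ with $|q-g'|_K$ as small as we please and put $P_1(z)=g(0)+\int_0^z q(\zeta)\,d\zeta$, the integral taken along the radial segment from $0$ to $z$; this is a polynomial with $P_1'=q$. The Fundamental Theorem of Calculus along radii, valid because $g|_K\in A^1(K)$ and $K$ is starlike, gives $P_1(z)-g(z)=\int_0^z\bigl(q-g'\bigr)(\zeta)\,d\zeta$ for $z\in K$, so $P_1$ already approximates $f$ and $f'$ on $K$ to within a constant multiple of $|q-g'|_K$; the defect is that nothing is known about $P_1$ on $E$.

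It remains to correct $P_1$ on $E$ without spoiling it, in the $C^1$ sense, on $K$. Choose a compact neighbourhood $K_1$ of $K$ with $K\subset K_1^{0}$, $K_1\cap E=\emptyset$ and $\C\setminus K_1$ connected. Then $Q_1:=K_1\cup E$ is again a Mergelyan set, since the union of two disjoint compacta with connected complements has connected complement (a standard fact of plane topology). The function $w$ equal to $0$ on $K_1$ and to $g-P_1$ on $E$ belongs to $A(Q_1)$, so there is a polynomial $R$ with $|R-w|_{Q_1}$ as small as we please; then $|R|_{K_1}$ is small, hence so is $|R'|_K$ by the Cauchy estimates (as $K\subset K_1^{0}$), while $|R-(g-P_1)|_E$ is small. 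Set $p=P_1+R$. On $E$ we have $p-g=R-(g-P_1)$; on $K$ we have $|p-g|_K\le|P_1-g|_K+|R|_K$ and $|p'-g'|_K\le|q-g'|_K+|R'|_K$. Choosing the successive errors small enough — the required smallness depending only on $\max_{z\in K}|z|$ and on $\mathrm{dist}(K,\partial K_1)$ — makes all three quantities less than $\epsilon$, establishing the density.

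The heart of the matter is the control of the derivative on $K$: Mergelyan's Theorem gives nothing about derivatives, and it is precisely the starlikeness of $K$ that lets us recover such control — by radial integration for the principal term $P_1$, and, for the corrective term $R$, by pushing the approximation out to a compact neighbourhood of $K$ and invoking Cauchy's estimates. The one genuinely topological point is that this neighbourhood can be chosen with connected complement, keeping $Q_1$ a Mergelyan set; with that in hand, the interpolation at $z_1,\dots,z_n$ is furnished by the Walsh Lemma at no extra cost.
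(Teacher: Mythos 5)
Your proof is correct and follows essentially the same route as the paper: a radial primitive of a Mergelyan approximant of $f'$ to obtain $C^1$-approximation on the starlike set $K$, density of the polynomials in the space normed by $\max\{|\cdot|_Q,|\cdot^\prime|_K\}$, the Walsh Lemma for the interpolation, and the symmetrization $p\mapsto \frac12\bigl(p(z)+\overline{p(\overline z)}\bigr)$ for real coefficients. Your correction step on $E$ --- Mergelyan on $K_1\cup E$ for an enlarged compact neighbourhood $K_1$ of $K$ with connected complement, plus Cauchy estimates to control $R^\prime$ on $K$ --- spells out in full the simultaneous density on $K$ and $E$ that the paper dispatches with ``by Mergelyan and the preceding paragraph,'' and performing the symmetrization after the Walsh step (checking that the real interpolation values are preserved) is the cleaner order.
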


\begin{proof}
Without loss of generality, we may assume that $f(0)=0.$ Choose a number $r>1$ such that $r>|z|,$ for all $z\in K.$ 
By Mergelyan's Theorem, there is a polynomial $q,$ for which $|q(z)-f^\prime(z)|<\epsilon/r<\epsilon,$ for all $z\in K.$ Consider the polynomial 
$$
	p(z) = \int_0^zq(\zeta)d\zeta. 
$$
Clearly $|p^\prime-f^\prime|<\epsilon.$ Moreover, denoting by $[0,z]$ the segment from $0$ to $z,$ we have
$$
	|p(z)-f(z)| = \int_{[0,z]}\left|q(\zeta)-f^\prime(\zeta)\right||d\zeta|<\epsilon.
$$
Now, if $f(\overline z)=\overline{f(z)}, z\in K,$ then replacing $p(z)$ by $(p(z)+\overline p(\overline z))/2,$ we obtain a $p$ with real coefficients

Consider $X=A^1(K)\cap A(E),$ endowed with its canonical norm. Then, by Mergelyan and the preceding paragraph, the polynomials are dense in $X.$ In addition point evaluation and point derivation at points of $K$ are continuous linear functionals. Then the Walsh Lemma implies the result. 
\end{proof}

A closed subset $E\subset \C$ is called an {\em Arakelian set}, if for every function $f\in A(E)$ and every positive {\em number} $\epsilon,$ there is an entire function $g,$ such that $|f(z)-g(z)|<\epsilon,$ for all $z\in E.$  By Arakelian's Theorem,  $E$ is an Arakelian set if and only if $\overline{\mathbb C}\setminus E$ is connected and locally connected (see \cite{G1987}).

A closed subset $E\subset \C$ is called a {\em Carleman set}, if for every function $f\in A(E)$ and every positive continuous {\em function} $\epsilon$ on $E,$ there is an entire function $g,$ such that $|f(z)-g(z)|<\epsilon(z),$ for all $z\in E.$ Obviously, a Carleman set is an Arakelian set. In the previous millenium, I found a necessary condition in order for an Arakelian set to be a Carleman set - namely, that for every compact set $K\subset\C,$ there is a compact set $Q\subset\C$ such that every component of $E^0$ that meets $K$ is contained in $Q.$ This condition is sometimes described by saying that ``the interior of $E$ has no long islands". Nersesyan showed that this condition is also sufficient. Thus, an Arakelian set is a Carleman set if and only if its interior has no long islands. Thus, every Arakelian set having no interior is a Carleman set. In particular, the real line $\R$ is an Arakelian set and hence a Carleman set. In defining a Carleman set, by the Tietze extension theorem \cite{D1978} on closed sets, it makes no difference whether we consider the continuous function  $\epsilon$ to be defined on the closed set  $E$ or on all of $\C.$ 
For references and more information on Arakelian and Carleman sets, see \cite{G1987}. 

Since $\R$ is a Carleman set, every continuous function $f(x)$ on $\R$ can be approximated by an entire function $g(x)$ within $\epsilon(x),$ with $\epsilon(x)\searrow 0$ arbitrarily fast, as $x\rightarrow\infty.$ The following theorem of Hoischen  asserts that, if $f$ is not only continuous, but also continuously differentiable, there is an entire function $g,$  such that $g(x)$ approximates $f(x),$  $g^\prime(x)$ approximates $f^\prime(x)$ and moreover, $g$ and $g^\prime$ interpolate $f$ and $f^\prime$ respectively on a discrete subset of $\R.$ 

\medskip
\noindent
{\bf Hoischen`s Theorem  \cite{H1975}.}
{\em Let $f\in C^1(\R),$ $\epsilon$ be a positive continuous function on $\R$ and $X$ be a discrete subset of $\R.$ Then, there is an entire function $g,$ such that 
$$
	\max\{|g-f|,|g^\prime-f^\prime|\}<\epsilon, \quad \mbox{and} \quad
\quad g(x)=f(x), \, g^\prime(x)=f^\prime(x), \,\, \forall x\in X.
$$
If $f$ is real-valued, then we may take $g$ to also be real-valued on $\R.$ 
}

\medskip

The last sentence of the theorem is not stated in \cite{H1975}, but follows in the usual manner by replacing the function $g(z)$ by the function $(g(z)+\overline{g(\overline z})/2.$

 A {\em chaplet} is a closed set that is the union of an infinite family of disjoint closed discs that is  locally finite (i.e. each compact set meets at most finitely many members of the family). Henceforth, $E$ will denote a chaplet that is disjoint from the real axis and is symmetric with respect to the real axis. Thus, $E$ is the union of an infinite but locally finite family of disjoint closed discs $E^+_n$ in the open upper half-plane and their reflections $E^-_n$ in the open lower half-plane. We shall suppose that the radii of the $E^\pm_n$ tend to infinity. We shall also suppose that these discs are ordered and separated in the following sense.  There is a sequence $r_n>0, r_n\nearrow \infty,$ such that $E^\pm_n$ is contained in the annulus $r_n<|z|<r_{n+1},$ for each $n.$ A chapelet $E$ having all of these properties will be called a {\em special chaplet.}

\begin{lemma}\label{RE}

Let $E$ be a special chaplet and set $F=\R\cup E.$  Suppose  $\{x_k\}, k\in\Z,$ is a strictly increasing sequence of real numbers tending to $\infty,$ as $n\rightarrow\infty$ and   $\epsilon$ is a positive continuous function  on $\C,$ Then, for every  function $f\in A(F)$ such that $ f|_\R\in C^1(\R)$ and $f(\overline z) = \overline{f(z)}$ for all $z\in F,$ there exists an entire function $g,$ such that $g(\overline z) = \overline{g(z)}$ for all $z\in \C$ and
$$
	|g(z)-f(z)|<\epsilon(z), \, \forall z\in F; \quad 
	|g^\prime(z)-f^\prime(z)|<\epsilon(z), \, \forall z\in\R;
$$
$$
		\quad g(x_k)=f(x_k), \,\, g^\prime(x_k)=f^\prime(x_k), \quad k\in\Z. 
$$
 
\end{lemma}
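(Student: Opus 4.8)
I would prove this by a Carleman–type exhaustion argument: first take care of the real line by Hoischen's Theorem, and then correct the discs of the chaplet one pair at a time, keeping each correction so small on a growing $2$–dimensional disc that the telescoping series it produces converges to an \emph{entire} function.

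\medskip\noindent\textbf{Reductions.} Replacing $\epsilon(z)$ by $\min\{\epsilon(z),\epsilon(\bar z)\}$ we may assume $\epsilon$ is symmetric; and since $F$, $\{x_k\}$ and $f$ are symmetric, at every step we may restore symmetry of whatever entire function $h$ has been produced by passing to $\bigl(h(z)+\overline{h(\bar z)}\bigr)/2$, an operation that does not increase $|h-f|$ on the symmetric set $F$, nor $|h'-f'|$ on $\R$, and which preserves the interpolation data at the real points $x_k$. Apply Hoischen's Theorem to $f|_\R\in C^1(\R)$, to $\{x_k\}$, and to a positive continuous $\delta$ on $\R$ with $\delta<\epsilon/2$, obtaining an entire symmetric $g_0$ with $|g_0-f|<\delta$, $|g_0'-f'|<\delta$ on $\R$ and $g_0=f$, $g_0'=f'$ at each $x_k$. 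Put $\psi=f-g_0$; then $\psi\in A(F)$ is symmetric, $\psi|_\R\in C^1(\R)$ with $|\psi|,|\psi'|<\delta$, and $\psi$ vanishes to first order at every $x_k$. It remains to find an entire symmetric $\Psi$ with $|\Psi-\psi|<\epsilon/2$ on $F$, $|\Psi'-\psi'|<\epsilon/2$ on $\R$ and $\Psi=\Psi'=0$ at each $x_k$; then $g=g_0+\Psi$ answers the lemma.

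\medskip\noindent\textbf{The telescoping construction.} Since $E$ is special, $E^\pm_n\subset\{r_n<|z|<r_{n+1}\}$ with $r_n\nearrow\infty$; choose $L_n\nearrow\infty$ so that $B_n=\{|z|\le L_n\}$ contains $E^\pm_1\cup\cdots\cup E^\pm_n$ but is disjoint from $E^\pm_m$ for $m>n$, and choose $T_n\nearrow\infty$, $T_n>L_n$, so that the real segments $\{T_n\le|x|\le T_{n+1}\}$ (with a short overlap from one stage to the next, serving as a buffer) tile the part of $\R$ beyond the discs $B_n$. I would construct $\Psi=\sum_{n\ge1}p_n$ recursively, with $\Psi^{(0)}=0$: at stage $n$ I seek an entire symmetric $p_n$ such that, writing $\Psi^{(n)}=\Psi^{(n-1)}+p_n$,
\[
  |p_n|<\sigma_n\ \text{on}\ B_{n-1},\qquad
  \Psi^{(n)}\approx\psi\ \text{(in }C^1\text{ on }\R\text{, in }C^0\text{ on }E^\pm_n)\ \text{on}\ E^\pm_n\cup\{|x|\le T_{n+1}\},
\]
with $\Psi^{(n)}$ matching $\psi,\psi'$ at the $x_k$ in that range, the $\sigma_n>0$ being at my disposal. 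Such $p_n$ I obtain from Lemma~\ref{KE} applied on a compact symmetric set $Q_n$ that is the union of: the real segment $\{|x|\le T_{n+1}\}$ together with a thin rectangular $2$–neighbourhood of $\{|x|\le T_n\}$ attached to it, with target $0$ there (this is legitimate precisely because the target on the rectangle is $0$ — a holomorphic function — not the merely $C^1$ function $\psi$; Cauchy's formula on the rectangle then also makes $p_n'$ small there); the reservoir disc $B_{n-1}$, target $0$; and $E^\pm_n$, target $\psi-\Psi^{(n-1)}$, which is holomorphic near $E^\pm_n$. The segment $\{|x|\le T_{n+1}\}$ plays the rôle of the starlike set $K$ of Lemma~\ref{KE} (applied after translating so the base point is an endpoint of the segment, so that the radial derivative there is the ordinary derivative), which yields $C^1$–approximation of $\psi-\Psi^{(n-1)}$ and interpolation at the $x_k$ along $\{T_n\le|x|\le T_{n+1}\}$. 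After the small shrinkings indicated these pieces are mutually separated, $\overline{\C}\setminus Q_n$ is connected, and the assembled target lies in $A(Q_n)$, so Lemma~\ref{KE} applies; symmetry is restored by averaging.

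\medskip\noindent\textbf{Conclusion, and the main obstacle.} As $B_{n-1}\nearrow\C$ and $|p_n|<\sigma_n$ on $B_{n-1}$, taking $\sum_n\sigma_n$ small makes $\sum p_n$ converge uniformly on compact subsets of $\C$, so $\Psi$ is entire and symmetric. Tracking estimates: on $E^\pm_m$, the term $p_m$ brings $\Psi^{(m)}$ within $\sigma_m$ of $\psi$, and every later $p_n$ ($n>m$) is $<\sigma_n$ there since $E^\pm_m\subset B_{n-1}$; on $\R$ the rectangular collars and the segments $\{T_n\le|x|\le T_{n+1}\}$ keep $|\Psi-\psi|$ and $|\Psi'-\psi'|$ small, the buffers preventing errors from piling up in the gaps; and each $p_n$ vanishes to first order at the $x_k$ in $Q_n$, so, since $\psi=\psi'=0$ there, $\Psi=\Psi'=0$ at every $x_k$. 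Choosing $\delta$, the $\sigma_n$, the collar widths, and the accuracies in Lemma~\ref{KE} small enough, each in turn depending on the previous ones and on $\min_{B_n}\epsilon$, gives $|g-f|<\epsilon$ on $F$ and $|g'-f'|<\epsilon$ on $\R$. I expect the delicate point to be exactly this coordination: because the datum is only $C^1$ near $\R$, one cannot fatten the exhaustion into a genuine two–dimensional neighbourhood of $\R$ there, so the two–dimensional control needed to render the limit entire has to be supplied on sets where the target is $0$ — the reservoir discs and the rectangular collars — while the $C^1$–approximation and the interpolation on $\R$ must come from the $A^1$–feature of Lemma~\ref{KE} on the one–dimensional real segments; making these two mechanisms cooperate, with buffers sealing the gaps, is the heart of the argument.
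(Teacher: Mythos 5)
Your overall strategy is the paper's: Hoischen's theorem disposes of the $C^1$ approximation and interpolation on $\R$, and then Lemma \ref{KE} is applied recursively on an exhausting sequence of compacts (a two-dimensional ``already handled'' region where the target is holomorphic, the next real segments, and the next pair of chaplet discs), the successive corrections being small enough on the growing discs to converge locally uniformly to an entire symmetric function. But there is a concrete gap at the junctions on the real axis. At stage $n$ you feed Lemma \ref{KE} a piecewise target that is $0$ on the collar and reservoir disc (hence on $\{|x|\le T_n\}$) and $\psi-\Psi^{(n-1)}$ on $\{T_n\le |x|\le T_{n+1}\}$; since $\psi-\Psi^{(n-1)}$ is small but not zero at $\pm T_n$, this assembled target is discontinuous (indeed doubly defined where the segment meets the rectangle), so it does not belong to $A(Q_n)$, its restriction to $K$ is not in $A^1(K)$, and Lemma \ref{KE} cannot be invoked as you state it. An ``overlap serving as a buffer'' does not cure this: overlapping the segments merely relocates the jump. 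The repair is exactly the device the paper builds into its recursion: include the junction points among the interpolation data at each stage, requiring the stage-$n$ polynomial to match the target's value and derivative there (in the paper, $p_n(\pm r_{n+1})=\varphi(\pm r_{n+1})$ and $p_n^\prime(\pm r_{n+1})=\varphi^\prime(\pm r_{n+1})$); then the next residual vanishes to first order at the junction, the next piecewise target is continuous with continuous radial derivative, and Lemma \ref{KE} applies. (Alternatively one could blend with a cutoff on the buffer, the error being controlled by the previous stage's accuracy, but some such mechanism must be made explicit.)

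A secondary point to repair: in your appeal to Lemma \ref{KE} the roles of $K$ and $E$ are muddled. You cannot let the real segment alone play the role of the starlike $K$ while the collar and the reservoir disc are pushed into the Mergelyan part, because the lemma requires $E$ disjoint from $K$ and those sets meet the segment; moreover, translating the base point to an endpoint of the segment destroys starlikeness of the union containing the reservoir disc. The correct reading, and the paper's choice, is $K=$ reservoir disc $\cup$ collar $\cup$ segment, which is compact, symmetric and starlike with respect to the origin (each piece is starlike about $0$), with $E=E^{\pm}_n$ as the Mergelyan part. With these two corrections your argument goes through and is essentially the proof in the paper; the remaining difference (subtracting the Hoischen function first and building an additive correction series, rather than letting the polynomials $p_n$ themselves converge to $g$) is cosmetic.
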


\begin{proof}  Note that $X=\{x_k:k\in\Z\}$ is a discrete subset of $\R.$ 
By Hoischen's Theorem, there is an entire function $\varphi$ such that  $\varphi(\overline z) = \overline{\varphi(z)},$ 
$$
	\max\{|f(x)-\varphi(x)|,|f^\prime(x)-\varphi^\prime(x)|\}<\epsilon(x)/2, \quad \mbox{for all} \quad x\in\R
$$
and
$$
		f (x_k)=\varphi (x_k), \quad k\in\Z.
$$

For $n=1,2,\ldots,$ set $\overline{D_n}=\{z: |z|\le r_n\},$ where $\{r_n\}$ is the sequence of separating radii for the chaplet $E,$ and set 
$$
	K_n = [-r_{n+1},-r_n]\cup \overline{D_n} \cup [r_n,r_{n+1}].
$$

Without loss of generality, we may assume that $\epsilon(z)=\epsilon(|z|)$ and that $\epsilon(r)$ is strictly decreasing on $[0,+\infty).$ 
Let $\epsilon_1,\epsilon_2,\ldots,$ be a strictly decreasing sequence of positive numbers, such that
$$
	\epsilon_n<\min_{z\in K_n\cup E_n}\epsilon(z)=\epsilon(r_{n+1}) \quad \mbox{and} \quad
	\sum_{k=n+1}^\infty 2\epsilon_k < \epsilon_n, \quad n=1,2,\ldots. 
$$

The compact sets $K_n$ are starlike with respect to the origin and symmetric with respect to the real axis. The union $E_n$ of the two closed discs $E_n^\pm$ is a Mergelyan set disjoint from $K_n.$ Each compact set $Q_n=K_n\cup E_n$ satisfies the hypotheses of Lemma \ref{KE} and we shall recursively define corresponding functions $f_n.$ 

We define $f_1\in A^1(Q_1),$ by setting  
$$
f_1(z) = \left\{
\begin{array}{ll}
	\varphi(z),	&	z\in K_1\\
	f(z),		&	z\in E_1.
\end{array}
\right.
$$
By Lemma \ref{KE}, there is a polynomial $p_1,$ with real coefficients, such that
$$
	|p_1-f_1|_{Q_1}<\epsilon_2, \quad  |p_1^\prime-f_1^\prime|_{K_1}<\epsilon_2, 
$$
and, for $X_1= \{x_k: x_k\in K_1\},$ 
$$
p_1(x)=\varphi(x), \, x\in X_1, \quad p_1(\pm r_2)=\varphi(\pm r_2), \quad p_1^\prime(\pm r_2)=\varphi^\prime(\pm r_2). 
$$

Set  $p_0=\varphi$ and suppose, for $n\ge 1$ and $k=1,\ldots, n-1,$ we already have polynomials $p_k,$ with real coefficients, such that, for
$$
f_k(z) = \left\{
\begin{array}{ll}
	p_{k-1}(z),	&	z\in \overline D_k\\
	\varphi(z),		&	z\in[-r_{k+1},-r_k]\cup[r_k,r_{k+1}]\\
	f(z),			&	z\in E_k,
\end{array}
\right.
$$
we have 
$$
	|p_k-f_k|_{Q_k}<\epsilon_{k+1}, \quad  
		|p_k^\prime-f_k^\prime|_{K_k}<\epsilon_{k+1}, 
$$
and, for $X_k= \{x_j: x_j\in K_k\},$
$$
p_k(x)=\varphi(x), \, x\in X_k, \quad p_k(\pm r_{k+1})=\varphi(\pm r_{k+1}), \quad p_k^\prime(\pm r_{k+1})=\varphi^\prime(\pm r_{k+1}). 
$$

We define $f_n\in A^1(Q_n),$ by setting
$$
f_n(z) = \left\{
\begin{array}{ll}
	p_{n-1}(z),	&	z\in \overline D_n\\
	\varphi(z),		&	z\in[-r_{n+1},-r_n]\cup[r_n,r_{n+1}]\\
	f(z),			&	z\in E_n.
\end{array}
\right.
$$
By Lemma \ref{KE}, there is a polynomial $p_n,$ with real coefficients, such that
$$
	|p_n-f_n|_{Q_n}<\epsilon_{n+1}, \quad  |p_n^\prime-f_n^\prime|_{K_n}<\epsilon_{n+1}, 
$$
and, for $X_n = \{x_k: x_k\in K_n\},$ 
$$
p_n(x)=\varphi(x), \, x\in X_n, \quad p_n(\pm r_{n+1})=\varphi(\pm r_{n+1}), \quad p_1^\prime(\pm r_{n+1})=\varphi^\prime(\pm r_{n+1}). 
$$
By induction, the polynomials $p_n$ are now defined for all $n=1,2, \ldots.$

Fix positive integers $k<m<n.$ On $\overline D_k,$ we have
$$
	|p_n(z)-p_m(z)|\le \sum_{j=m}^{n-1}|p_{j+1}(z)-p_j(z)|<\sum_{j=m}^{n-1}\epsilon_{j+1}
$$
and, since $\sum\epsilon_j$ is convergent, the sequence $p_n$ is uniformly Cauchy on each $\overline D_k$ and hence  converges uniformly on compact subsets to an entire function $g.$ Of course, we also have that $p_n^\prime\rightarrow g^\prime$ uniformly on compact subsets.  Since all of the $p_n$ have real coefficients, $g(\overline z)=\overline{g(z)}.$

Fix $z\in E.$ Then, $z\in E_m,$ for some $m$ and $f(z)=f_m(z).$ Choose $n>m$ such that $|g(z)-p_n(z)|<\epsilon_{m+1}.$ Then, 
$$
	|g(z)-f(z)|\le |g(z)-p_n(z)| + |p_n(z)-f_m(z)| \le \epsilon_{m+1} +\sum_{m+1}^{n+1}\epsilon_k<\epsilon_m<\epsilon(z).
$$

There remains to show that $g$ has the desired approximation and interpolation properties on $\R.$ To show that
$$
	|g-f|_\R<\epsilon, \quad  |g^\prime-f^\prime|_\R<\epsilon, 
$$ 
it suffices, by the triangle inequality, to show that
$$
	|g-\varphi|_\R<\epsilon/2, \quad  |g^\prime-\varphi^\prime|_\R<\epsilon/2, 
$$  
Fix $x\in\R.$ Let $m=m_x$ be the minimum $m,$ such that $x\in K_m.$ For $n\ge m,$ 
$$
	|p_n(x)-\varphi(x)| = |p_n(x)-f_m(x)| \le  |p_m(x)-f_m(x)| + \
		\sum_{k=m}^{n-1}|p_{k+1}(x)-p_k(x)|\le
$$
$$
	\epsilon_{m+1} + \sum_{k=m}^{n-1}\epsilon_{k+2} <
		 \sum_{k=m+1}^\infty\epsilon_k<\epsilon_m/2<\epsilon(x)/2.
$$
Thus, $|g(x)-\varphi(x)|<\epsilon(x)/2,$ for all $x\in\R.$ From the triangle inequality, $|g(x)-f(x)|<\epsilon(x),$ for all $x\in\R.$ The proof that $|g^\prime(x)-f^\prime(x)|<\epsilon(x),$ for all $x\in\R$ is completely analogous.

Now, fix $k\in \Z$ and let $m$ be the first $m$ for which $x_k\in K_m.$ Then, for all $n\ge m,$ we have $p_n(x_k)=\varphi(x_k)=f(x_k)$ and $p_n^\prime(x_k)=\varphi^\prime(x_k)=f^\prime(x_k).$
Since $p_n\rightarrow g$ and  $p_n^\prime\rightarrow g^\prime,$ we obviously have $g(x_k)=f(x_k)$ and $g^\prime(x_k)=f^\prime(x_k).$

\end{proof}

\begin{lemma}\label{Phi}
Let $E$ be a special chaplet and let  $\epsilon$ be a positive continuous function on $\C.$ Then, for every  function $f\in A(E)$ such that $f(\overline z) = \overline{f(z)},$ there exists an entire function $\Phi,$ such that $|f-\Phi|<\epsilon$ on $E;$  $\Phi$ maps $\R$ bianalytically onto $\R;$ and $\Phi^\prime>0$ on $\R.$  
\end{lemma}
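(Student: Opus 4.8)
The plan is to reduce the statement to Lemma \ref{RE} by extending $f$ from $E$ to all of $F=\R\cup E$ so that the extension is the identity on $\R.$ First I would choose a positive continuous function $\epsilon_0$ on $\C$ with $\epsilon_0\le\epsilon$ everywhere and $\epsilon_0\le 1/2$ on $\R$; for instance $\epsilon_0=\min\{\epsilon,1/2\}$ works, being positive and continuous. Then I would define $\tilde f$ on $F$ by $\tilde f(z)=f(z)$ for $z\in E$ and $\tilde f(x)=x$ for $x\in\R.$ Since $E$ and $\R$ are disjoint closed sets and $z\mapsto z$ is entire, $\tilde f$ is continuous on $F$ and holomorphic on $F^0=E^0,$ so $\tilde f\in A(F);$ moreover $\tilde f|_\R$ is the function $x\mapsto x,$ which lies in $C^1(\R),$ and $\tilde f(\overline z)=\overline{\tilde f(z)}$ holds on $E$ by hypothesis and trivially on $\R,$ hence on all of $F.$

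Next I would apply Lemma \ref{RE} to $F=\R\cup E,$ the function $\tilde f,$ the tolerance $\epsilon_0,$ and any admissible discrete sequence, say $x_k=k,\ k\in\Z.$ This produces an entire function $\Phi$ with $\Phi(\overline z)=\overline{\Phi(z)}$ such that $|\Phi-f|<\epsilon_0\le\epsilon$ on $E,$ while on $\R$ one has $|\Phi(x)-x|<\epsilon_0(x)$ and, crucially, $|\Phi'(x)-1|<\epsilon_0(x)\le 1/2$ for every $x\in\R.$ (The interpolation conditions $\Phi(x_k)=x_k,\ \Phi'(x_k)=1$ are not needed, but do no harm.)

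It then remains only to read off the required behaviour of $\Phi$ on $\R.$ From the symmetry $\Phi(\overline z)=\overline{\Phi(z)}$ it follows that $\Phi$ is real-valued on $\R.$ From $|\Phi'(x)-1|<1/2$ we get $\Phi'(x)>1/2>0$ for all $x\in\R,$ so $\Phi$ is strictly increasing, hence injective, on $\R;$ integrating $\Phi'>1/2$ gives $\Phi(x)>\Phi(0)+x/2$ for $x>0$ and $\Phi(x)<\Phi(0)+x/2$ for $x<0,$ whence $\Phi(x)\to\pm\infty$ as $x\to\pm\infty$ and $\Phi$ maps $\R$ onto $\R.$ Being real-analytic with nowhere vanishing derivative, it does so bianalytically, and $\Phi^\prime>0$ on $\R$ as required. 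I expect no serious obstacle in this argument: all of the analytic work has been absorbed into Lemma \ref{RE}, and the only points needing any care are the verification that the identity extension $\tilde f$ genuinely satisfies the hypotheses of that lemma and the observation that a single $\epsilon_0$ can simultaneously be made to dominate $\epsilon$ on $E$ and to be small enough on $\R$ to force $\Phi^\prime>0$ there.
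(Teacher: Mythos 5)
Your proposal is correct and is essentially the paper's own argument: extend $f$ to $F=\R\cup E$ as the identity on $\R$, shrink $\epsilon$ so that it is at most $1/2$ on $\R$, apply Lemma \ref{RE}, and conclude $\Phi'>1/2>0$ on $\R$, hence injectivity, unboundedness in both directions, surjectivity, and bianalyticity. The only cosmetic difference is that you argue surjectivity by integrating $\Phi'>1/2$, while the paper notes directly that $\Phi|_\R$ is unbounded above and below.
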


\begin{proof}
Set $\psi(x)=x,$ for $x\in\R.$  We may assume that $\epsilon(x)<1/2 = \psi^\prime(x)/2.$

Set $F=\R\cup E$ and extend $f,$  by setting $f=\psi$ on $\R.$ Then, $f\in A(F)$ and $f|_\R\in C^1(\R).$ By Lemma \ref{RE} there exists an entire function $\Phi,$ such that $\Phi(\overline z) = \overline{\Phi(z)}$ and
$$
	|\Phi-f|_F<\epsilon, \,  |\Phi^\prime-f^\prime|_\R<\epsilon. 
$$
Now,
$$
	\Phi^\prime(x) = f^\prime(x)-(f^\prime(x)-\Phi^\prime(x)) >
		 \psi^\prime(x)-\psi^\prime(x)/2 > 0.
$$
Hence  $\Phi^\prime>0$ on $\R.$ 
Thus, $\Phi:\R\rightarrow \R$ is injective. 
Since  the restriction $\Phi:\R\rightarrow\R$ is neither lower nor upper bounded and continuous, it follows that it is surjective. We have verified that $\Phi:\R\rightarrow\R$ is a bijection and, since $\Phi^\prime>0,$ it is bianalytic. 

\end{proof}

\begin{lemma}\label{H}
Let $E$ be a special chaplet and   $\epsilon$ be a positive continuous function on $\C.$ Then, there exists an entire function $H,$ such that $|H|<\epsilon$ on $E;$  $H(\overline z)=\overline{H(z)};$ 
$$
	|H(x)-1| < \epsilon(x) \quad \mbox{and} \quad
	 |H^\prime(x)|<\epsilon(x), \quad \mbox{for all} \quad x\in\R.
$$ 
\end{lemma}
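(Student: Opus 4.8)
The plan is to observe that Lemma \ref{H} is nothing but Lemma \ref{RE} applied to the ``target'' function on $F=\R\cup E$ that is identically $1$ on $\R$ and identically $0$ on $E$; no new construction is needed, only a verification that this target function satisfies the hypotheses of Lemma \ref{RE}.

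First I would set $F=\R\cup E$ and define $f\colon F\to\C$ by $f\equiv 1$ on $\R$ and $f\equiv 0$ on $E$. Because $E$ is a special chaplet, it is disjoint from $\R$ and its defining family of discs is locally finite, so $F$ is closed and the sets $\R$ and $E$ are each relatively closed (hence also relatively open) in $F$; consequently $f$ is continuous on $F$, and being locally constant it is trivially holomorphic on $F^0$, so $f\in A(F)$. Moreover $f|_\R\equiv 1\in C^1(\R)$ with $f'|_\R\equiv 0$, and $f$ is real-valued, so $f(\overline z)=\overline{f(z)}$ for all $z\in F$. Next I would pick any strictly increasing sequence $\{x_k\}_{k\in\Z}$ of reals tending to $\pm\infty$ (for instance $x_k=k$) and invoke Lemma \ref{RE} with this $f$, the given $\epsilon$, and $\{x_k\}$. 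This produces an entire function $H$ with $H(\overline z)=\overline{H(z)}$ for all $z\in\C$, with $|H(z)-f(z)|<\epsilon(z)$ for all $z\in F$, and with $|H'(z)-f'(z)|<\epsilon(z)$ for all $z\in\R$.

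Finally I would read off the conclusion: restricting $|H-f|<\epsilon$ to $E$ gives $|H|<\epsilon$ on $E$; restricting it to $\R$ gives $|H(x)-1|<\epsilon(x)$ for all $x\in\R$; and since $f'\equiv 0$ on $\R$, the derivative estimate reads $|H'(x)|<\epsilon(x)$ for all $x\in\R$. This is exactly the assertion of Lemma \ref{H}. There is essentially no obstacle here; the only point requiring care is that the piecewise-defined $f$ really lies in $A(F)$, which is immediate from the separation of $\R$ from $E$ that is built into the notion of a special chaplet. The interpolation data $g(x_k)=f(x_k)$, $g'(x_k)=f'(x_k)$ supplied by Lemma \ref{RE} play no role in the present statement, but they cost nothing to carry along.
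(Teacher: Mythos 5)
Your proposal is correct and follows exactly the paper's own (one-line) proof: apply Lemma \ref{RE} to the function equal to $1$ on $\R$ and $0$ on $E$, and read off the estimates. The extra verifications you supply (that this piecewise function lies in $A(F)$, the choice $x_k=k$) are sound and merely make explicit what the paper leaves implicit.
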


\begin{proof}
In Lemma \ref{RE} we set $f$ equal to $0$ on $E$ and $1$ on $\R.$ 
\end{proof}


\smallskip
\section{Proof of Theorem 2}

\begin{proof}
The proof follows the steps in the proof of Theorem 1. However, in place of
$$
	f(z) = 
z + \sum_{j=1}^\infty \lambda_j h_j(z),
$$
$$
	h_1=1; \quad  \mbox{and} \quad h_j(z) = e^{-z^2} \prod_{k=1}^{j-1} (z-\alpha_k),  \quad \mbox{for} \quad j=2,3,\ldots,
$$
our desired function $f$ will be of the  form 
\begin{equation}\label{universal}
	f(z) = \lim_{n\rightarrow\infty} f_n(z) = 
			\Phi(z) + H(z)\sum_{j=1}^\infty \lambda_jh_j(z),
\end{equation}
$$
	h_1=1; \quad  \mbox{and} \quad h_j(z) = e^{-\Phi^2(z)} \prod_{k=1}^{j-1} \left(\Phi(z)-\Phi(\alpha_k)\right),  \quad \mbox{for} \quad j=2,3,\ldots,
$$
The functions $\Phi$ and $H$ will come respectively from Lemmas \ref{Phi} and \ref{H}. The purpose of the function $\Phi$ will be to assure that $f$ has all the desired properties except possibly the main one (bijection of $A$ onto $B$). The purpose of the $h_j$'s is to recursively make adjustments to obtain the main condition. The purpose of the function $H,$ which is small on $E$ and close to $1$ on $\R$ is to combine the desired behaviour on $E$ with the desired behaviour on $\R.$ The function $H$ will be sufficiently small on $E$ that adding the series does not destroy the universality achieved  by $\Phi.$

Let $E^\pm_n$ be the closed discs making up a special chaplet $E,$ and denote by $a_n^\pm$  and $\rho_n$ respectively the center and radius of $E^\pm_n.$ Denoting by $\overline D_n$ the closed disc centered at the origin of radius $\rho_n,$ we have $E^\pm_n=\overline D_n+a_n^\pm.$ 

{\em Claim $\Phi.$
There is an entire function $\Phi,$ such that $\Phi$ maps $\R$ bianalytically onto $\R,$  $\Phi^\prime>0$ on $\R$  and the sequence $\Phi(\cdot+a_n^+)$ of translates of $\Phi$ is dense in the space of entire functions. Thus, $\Phi$ is universal.}

To establish the claim, 
let $p_n, \, n=1,2,\ldots,$ be a  sequence of all the  polynomials whose coefficients have both real and imaginary parts rational. Since these polynomials are dense in the space of entire functions, an entire function will be universal, providing its translates approximate each $p_n.$ We shall assume that each polynomial of the sequence occurs infinitely often in the sequence. Define a function $\varphi\in H(E)$ by setting $\varphi(z)=p_n(z-a_n^+)$ on $E_n^+$ and $\varphi(z)=\overline{p_n(\overline{z-a_n^-})},$ on $E_n^-,$ for $n=1,2,\ldots.$ By Lemma \ref{Phi}, there exists an entire function $\Phi,$ such that  $\Phi$ maps $\R$ bianalytically onto $\R;$ $\Phi^\prime>0$ on $\R;$ $\Phi(\overline z)=\overline{\Phi(z)}$  and
\begin{equation}\label{f-varphi}
	|\Phi(z)-\varphi(z)|<1/n, \quad \mbox{for all} \quad z\in E_n^\pm, \quad n=1,2,\ldots.
\end{equation}
We claim that the sequence $\Phi(\cdot+a_n^+)$ is dense in the space of entire functions.  Indeed, fix an entire function $g,$ a compact set $K$ and an $\epsilon>0.$ There is a $p_m$ such that $|p_m-g|<\epsilon$ on $K.$ 
For all sufficiently large $n,$ we have $K\subset \overline D_n$ and $1/n<\epsilon.$ There is such an $n$ with  $p_n=p_m.$ Then $|\Phi(z)-p_n(z-a_n^+)|<1/n$ on $E_n^+,$ hence $|\Phi(z+a_n^+)-p_n(z)|<1/n$ on $\overline D_n.$ Therefore
$$
	|\Phi(z+a_n^+)-g(z)|_K\le|\Phi(z+a_n^+)-p_n(z)|_K+|p_n(z)-g(z)|_K<1/n+\epsilon<2\epsilon.
$$
Thus, the sequence $\Phi(\cdot+a_n^+)$ of translate of $\Phi$  is indeed dense in the space of entire functions. This establishes {\em Claim $\Phi$.}

{\em Claim $H.$ There is an entire function $H,$ such that  $H(\overline z)=\overline{H(z)},$
$$
0<H(x)<2,  \quad H^\prime(x) h_j(x)=O(\Phi^\prime(x)), \quad j\ge 1, 
		\quad\quad x\in \R
$$
and
$$
	H(z)h_j(z)<O(1/z), \quad j\ge 1, \quad z\in E.
$$}

To verify this claim, we begin by choosing a positive continuous function such that
$$
	\epsilon(z) < \min\{1, |h_1(z)z|^{-1},\ldots,|h_n(z)z|^{-1}\}, 
		\quad z\in E_n;
$$ 
$$
	\epsilon(x) < \min\{1,\Phi^\prime(x)h_1^{-1}(x),\ldots,\Phi^\prime(x)h_n^{-1}(x)\}, 
		\quad |x|\ge n-1, \quad n\ge 1.
$$
The positive continuous function $\epsilon$ is now defined on $\R\cup E$ and we may extend it to all of $\C$ by the Tietze extension theorem for closed sets \cite{D1978}. 

Now, let $H$ be an entire function associated to $\epsilon$ by Lemma \ref{H} and fix $j.$ Then $0<|H(x)-1|<\epsilon(x)<1,$ hence $0<H(x)<2.$  For $|x|\ge j, \, |H^\prime(x)h_j(x)|\le|\Phi^\prime(x)|,$ so $H^\prime h_j = O(\Phi^\prime).$ Now, suppose $z\in E.$ Then, $z\in E_n,$ for some $n=n(z).$ For all sufficiently large $z,$ we have $n(z)>j,$ so $|H(z)|<\epsilon(z)\le |h_j(z)z|^{-1}.$ Thus, $H(z)h_j(z)=O(1/z)$ on $E.$  We have verified that $H$ satisfies the Claim H.

Bearing in mind Claims $\Phi$ and $H$  for $\Phi$ and $H$ respectively, we can imitate the proof of Theorem \ref{1} to construct the appropriate sequences $\alpha_n, \beta_n$ and $\lambda_n,$ so that the function given by (\ref{universal}) has all of the properties  required for Theorem \ref{2}.

By choosing the $\lambda_n$'s sufficiently small, it is easy to ensure that the series converges uniformly on compacta, so that $f$ is an entire function. 

{\em Claim $f^\prime.$} We may  choose the  the $\lambda_n$'s so small that $f_n^\prime>0$ and  $f^\prime(x)>0,$ for $x\in\R.$ To see this, firstly we have 
$$
	f^\prime(x) \ge \Phi^\prime(x) - \sum_{j=1}^\infty\left(|H^\prime(x)\lambda_jh_j(x)|+
		|H(x)\lambda_jh_j^\prime(x)|\right).
$$
To establish the claim, it is sufficient to verify that, as $x\rightarrow \infty,$  
$$
	H^\prime h_j = O(\Phi^\prime) \quad \mbox{and} \quad Hh_j^\prime = O(\Phi^\prime).
$$
The first of these follows from Claim H. The second follows from the fact that $e^{-t^2}t^m=O(1)$ for every natural number $m.$ This establishes Claim $f^\prime.$

From Claim $f^\prime,$ we have that $f_n:\R\rightarrow \R$ and $f:\R\rightarrow \R$ are strictly increasing. We may  further choose the $\lambda_n$'s so small that the series is bounded on $\R.$ Since $\Phi$ is neither upper nor lower bounded, the same is true of $f_n$ and $f,$ and since $f_n$ and $f$ are continuous, it follows that $f_n:\R\rightarrow \R$ and $f:\R\rightarrow \R$ are surjective. We now have that $f_n:\R\rightarrow\R$ and $f:\R\rightarrow\R$ are bijections and since  $f_n^\prime(x)>0, f^\prime(x)>0, \, x\in \R,$ the mappings $f_n:\R\rightarrow \R$ and $f:\R\rightarrow \R$ are bianalytic.

The recursive choice of the sequences $\alpha_n, \beta_n$ and $\lambda_n$ is the same as in the proof of Theorem \ref{1}, but the difference between Theorem \ref{1} and Theorem \ref{2} justify our presenting this part of the proof again in this new context. 

First, we choose enumerations  $\{a_n\}$ and $\{b_n\}$ of $A$ and $B,$ where $a_1$ and $b_1$ are respectively preassigned values of $A$ and $B.$   The sequences  $\{\alpha_n\}$ and $\{\beta_n\}$ will be rearrangements of   $\{a_n\}$ and $\{b_n\}$ chosen recursively. Set 
$$
\alpha_1 = a_1, \quad \beta_1 = b_1 \quad \mbox{and} \quad \lambda_1=\frac{\beta_1-\Phi(\alpha_1)}{H(\alpha_1)}. 
$$ 
Thus, $f_1(\alpha_1)=\beta_1.$ Set $\beta_2=b_2.$ 

We proceed by induction. Suppose  we have respectively distinct
$$
\alpha_1,\ldots, \alpha_{2n-1};\quad \lambda_1, \ldots, \lambda_{2n-1}; 
\quad \beta_1, \ldots, \beta_{2n}
$$
$$
\alpha_{2k-1} =  (\mbox{first} \, a_i)\in A\setminus\{\alpha_j:j< 2k-1\}, \quad k=1,\ldots,n
$$
$$
\beta_{2k} =  (\mbox{first} \, b_i)\in B\setminus\{\beta_j:j< 2k\}, \quad k=1,\ldots,n
$$
$$
f_{2n-1}(\alpha_j)=\beta_j, \quad j=1,\ldots, 2n-1.
$$
To complete the induction, it suffices to choose  
$\alpha_{2n}, \lambda_{2n},
\beta_{2n+1}, \alpha_{2n+1}, \lambda_{2n+1}, 
\beta_{2(n+1)}$\\
with
$$
	f_{2n}(\alpha_{2n}) = \beta_{2n}, \quad\quad f_{2n+1}(\alpha_{2n+1}) = \beta_{2n+1}.
$$
Now, we explain how this can be done.

Firstly, we shall choose $\alpha_{2n}$ and $\lambda_{2n}$ so that $f_{2n}(\alpha_{2n})=\beta_{2n}.$ 
Choose $\eta>0$ such the (finitely many) smallness conditions  hold for $|\lambda_{2n}|<\eta.$ We have already noted that   $f_{2n-1}$ is surjective on $\mathbb R.$ Choose a number $x_n$ such that $f_{2n-1}(x_n)=\beta_{2n}.$ Note that $h_{2n}(x_n)\not=0.$
To see this, suppose, to obtain a contradiction, that  $h_{2n}(x_n)=0.$ Then, $x_n=\alpha_j,$ for some $j=1,\ldots,2n-1$ and $f_{2n-1}(\alpha_j)=\beta_{2n}.$ However, $f_{2n-1}(\alpha_j)=\beta_j.$ Thus, $\beta_{2n}=\beta_j.$ But this contradicts the choice of $\beta_{2n}$ as being distinct from $\beta_j,$ for $j<2n.$ Since $h_{2n}(x_n)\not=0,$ the function
$$
	\lambda(x) = \frac{\beta_{2n}-f_{2n-1}(x)}{H(x)h_{2n}(x)}
$$
is defined and continuous in a neighbourhood $I$ of $x_n,$ with $\lambda(x_n)=0;$ by choosing $I$ smaller we can also have that $|\lambda(x)|<\eta$ for $x\in I.$ By the density of $A$ there is some $\alpha_{2n}\in A\setminus \{\alpha_1,\ldots,\alpha_{2n-1}\}$ in $I.$ Write $\lambda_{2n}=\lambda(\alpha_{2n}).$ Then we have that $|\lambda_{2n}|<\eta$ and $f_{2n}(\alpha_{2n})=f_{2n-1}(\alpha_{2n})+H(\alpha_{2n})\lambda_{2n}h_{2n}(\alpha_{2n})=\beta_{2n}.$ We have chosen $\alpha_{2n}$ and $\lambda_{2n}.$ 

The choice of $\alpha_{2n+1}$ is easy.  
We choose the first of the $a_j $  different from 
$\alpha_1, \ldots, \alpha_{2n} $
and call it $\alpha_{2n+1}. $ 

Since $\Phi(\alpha_{2n+1})h_{2n+1}(\alpha_{2n+1})\not=0,$ the linear function 
$$\beta(\lambda) =  f_{2n}(\alpha_{2n+1})+\Phi(\alpha_{2n+1})\lambda h_{2n+1}(\alpha_{2n+1})$$ is non-constant. Hence $J_n=\{\beta(\lambda):|\lambda|<\epsilon\}$ is a non-empty open interval and, since $B$ is dense, we may choose an element of $(B\cap J_n)\setminus \{\beta_1,\cdots,\beta_{2n}\},$ which we call $\beta_{2n+1}.$ The element $\beta_{2n+1}$ by definition has the form $\beta_{2n+1}=\beta(\lambda)$ for a  certain $\lambda,$ with $|\lambda|<\epsilon.$ We denote this $\lambda$ by $\lambda_{2n+1}.$
If $\epsilon$ is sufficiently small, then $\lambda_{2n+1}$ satisfies  the (finitely many) smallness conditions we have imposed on the $\lambda_j's.$
With this choice of $\beta_{2n+1}$ and $\lambda_{2n+1},$ we have $f_{2n+1}(\alpha_{2n+1})=\beta_{2n+1}.$ 

For $\beta_{2(n+1)}$ we choose the first of the $b_j$'s different from 
$\beta_1, \ldots,\beta_{2n+1}.$ 

The induction is complete and it follows that $f$ restricts to a bijection of $A$ onto $B.$ Since $f^\prime(x)>0, x\in \R,$ this is an order isomorphism. 

There only remains to show that the $\lambda_n$'s can be chosen so small that $f$ is universal. 
From Claim H, we may choose $\lambda_j$ such that $|H(z)\lambda_jh_j(z)|<2^{-j}/|z|,$ for $z \in E.$ It follows that $|\Phi(z)-f(z)|\le 1/|z|, $ for $z\in E.$ In particular, 
$$
	\left(\Phi(z)-f(z)\right)\longrightarrow 0, \quad \mbox{as} \quad z\rightarrow \infty,  \quad z\in E. 
$$
To see that the universality of $\Phi$ entails that of $f,$ fix an entire function $g,$ a compact set $K$ and a positive number $\epsilon.$ The construction of $\Phi$ assures us that there are arbitrarily large $n,$ such that $K\subset \overline D_n, \, E_n^+=\overline D_n+a_n^+$ and $|\Phi(z+a_n^+)-g(z)|<\epsilon$ on $\overline D_n.$ We may choose such an $n$ so large that $|f(w)-\Phi(w)|<\epsilon$ on $E_n^+.$ This is the same as $|f(z+a_n^+)-\Phi(z+a_n^+)|<\epsilon$ on $\overline D_n.$ Hence, $|f(z+a_n^+)-g(z)|<2\epsilon$ on $\overline D_n$ and {\em a fortiori} on $K.$ We have verified that $f$ is indeed a universal function.  This completes the proof of Theorem \ref{2}.

\end{proof}


\section{Universality and linear dynamics}

In 1929, G. D. Birkhoff \cite{Bi1929} established - by an argument anticipating those of the present paper - the existence of universal entire functions. It turned out that universality is generic. 
That is, ``most" entire functions are universal. More precisely, the family of universal entire functions is residual (it is of Baire category II and its complement is of  category I)  in the space of all entire functions. 
However, the situation for order isomorphisms between countable dense subsets of the reals is quite the opposite. 
 Let $\mathcal E$ denote the space of entire functions; let $\mathcal E_R$  be the ``real" entire functions, that is, the entire functions that map reals to reals; and   let $\mathcal E_\rightarrow$ be the space of functions in $\mathcal E_R,$ whose restrictions to the reals are non-decreasing. 
Then, $\mathcal E_R$ is a closed nowhere dense subset of $\mathcal E$ 
and  $\mathcal E_\rightarrow$ is a closed nowhere dense subset of $\mathcal E_R.$ Thus, the class of universal entire functions constructed here is, within the space of
all entire functions, “topologically thin,” i.e., of first Baire category, whereas the
space of all universal entire functions is ``thick”, i.e., of second category. In other words,
the “hard analysis” driving our
constructions cannot be replaced by “soft” methods. 

Although most entire functions are universal, no explicit example is known. The only known function that has a universality property in the sense of Birkhoff (universality of translations) is the Riemann zeta-function! 
It is not entire, but as close to entire as possible. It has only one pole and that pole is simple. More precisely, the spectacular universality theorem of Voronin \cite{V1975} states that vertical translates of $\zeta(z)$ approach ``frequently"  all functions holomorphic in the strip $1/2<\Re z<1$ having no zeros. Moreover, Bagchi \cite{B1987} has shown that the Riemann Hypothesis is equivalent to the possibility of approximating the function  $\zeta(z)$ itself in this fashion by its own translates (a sort of almost periodicity). Bagchi establishes this formulation of the Riemann Hypothesis in the language of topological dynamics. 

Universality in the sense of the present paper is also connected to 
the burgeoning field of {\em linear} dynamics into which the concept of
universality has evolved within the past thirty or so years. Linear dynamics is a fusion of the (usually nonlinear) study of dynamical systems with the theory of
linear operators on topological vector spaces. In this setting, Birkhoff’s universality theorem becomes the
statement that translation operators on the space of entire functions are hypercyclic. An operator on a linear
topological space is called cyclic if there is a vector whose orbit under the operator’s iterates has dense linear
span. Hypercyclic means that the orbit itself is dense.

Recently the subject of linear dynamics has attained enough maturity to justify two recent books
written by accomplished young researchers (\cite{BM2009}, \cite{G-E2011}). It is possible that the results and
methods of the present paper, given their intrinsically “non-soft” nature,
might be of interest to researchers in this burgeoning area.


\end{document}